\newtheorem{thm}{ \bf Theorem}[section]
\newtheorem{lem}[thm]{ \bf Lemma}
\newtheorem{prop}[thm]{ \bf Proposition}
\newtheorem{rem}[thm]{ \bf Remark}
\newtheorem{assumption}[thm]{ \bf Assumption}
\numberwithin{equation}{section}
\newcommand{\noi}{\noindent}
\newcommand{\R}{\mathbb{R}}
\newcommand{\N}{\mathbb{N}}
\newcommand{\sig}{\sigma}
\newcommand{\eps}{\varepsilon}
\newcommand{\io}{\iota}
\newcommand{\ph}{\varphi}
\newcommand{\al}{\alpha}
\newcommand{\del}{\delta}
\newcommand{\Gam}{\mathnormal{\Gamma}}
\newcommand{\Del}{\mathnormal{\Delta}}
\newcommand{\La}{\mathnormal{\Lambda}}
\newcommand{\Ups}{\mathnormal{\Upsilon}}
\newcommand{\Om}{\mathnormal{\Omega}}
\newcommand{\EE}{{\mathbb E}}
\newcommand{\PP}{{\mathbb P}}
\newcommand{\calI}{\mathcal{I}}
\newcommand{\calQ}{\mathcal{Q}}
\newcommand{\AC}{\mathcal{AC}}
\DeclareMathOperator*{\argmax}{arg\,max}
\newcommand{\oo}{\overline}
\newcommand{\skp}{\vspace{\baselineskip}}
\newcommand{\iy}{\infty}
\newcommand{\osc}{\text{osc}}
\newcommand{\ds}{\displaystyle}
\newcommand{\IA}{\text{\it IA}}
\newcommand{\ST}{\text{\it ST}}
\begin{document}

\baselineskip=17pt

\title[]
{Optimality of the Generalized $\boldsymbol{c\mu}$ Rule\\ in the Moderate Deviation Regime}

\author[Rami Atar]{Rami Atar}
\address{Department of Electrical Engineering\\
Technion -- Israel Institute of Technology\\
Haifa, Israel}
\email{atar@ee.technion.ac.il}

\author[Subhamay Saha]{Subhamay Saha}
\address{Department of Mathematics\\
Indian Institute of Technology Guwahati\\
Guwahati, Assam, India}
\email{saha.subhamay@iitg.ernet.in}

\thanks{Research supported in part by the ISF (Grant 1315/12)}


\date{}

\begin{abstract}
This paper studies a multiclass queueing system with an associated
risk-sensitive cost observed in heavy traffic at the moderate deviation scale,
accounting for convex queue length penalties.
The main result is the asymptotic optimality of a dynamic index policy
known from the diffusion scale heavy traffic literature as the generalized $c\mu$ rule.

\skp

\noi
{\bf AMS subject classifications:}
60F10, 60K25, 93E20, 49N70

\skp

\noi
{\bf Keywords:}
moderate deviations; heavy traffic; risk-sensitive cost; differential games
\end{abstract}

\maketitle

\section{{\bf Introduction}}\label{sec1}

One of the most appealing results on queueing control problems (QCPs) in heavy traffic,
obtained by van Mieghem \cite{van},
is the asymptotic optimality (AO) of a dynamic index policy for the
multiclass queue with nonlinear holding (as well as queue length) costs.
This policy, referred to as the {\it generalized $c\mu$ rule},
acts as a localized version of the classical $c\mu$ rule, where the parameter $c$ is variable
and obtained by feedback from the system's state.
Specifically, a class-$i$ customer that experiences a delay of $\del$ units of time incurs a cost
$C_i(\del)$, where $C_i$ are given smooth, convex functions.
The rule is to prioritize the classes according to the index
$\mu_iC'_i(\hat Q_i^n(t))$, where $n$ is the scaling parameter,
$\mu_i$ and $\hat Q_i^n(t)$ are the corresponding service rate and
{\it diffusion scaled} queue length at time $t$, and $C'_i$ denotes the derivative of $C_i$.
The index is thus explicit and simple to compute, and in particular does not require
solving a dynamic programming equation, where at the same time, it is valid for a relatively
rich family of costs.

This work addresses the multiclass scheduling problem in the
{\it moderate deviation heavy traffic} (MDHT) regime,
with a {\it risk-sensitive} (RS) version of the above cost.
More precisely, assuming renewal structure for
the arrival and potential service time processes, the cost considered is
\[
\frac{1}{b_n^2}\log\EE\exp\Big\{b_n^2\int_0^T\sum_iC_i(\tilde Q^n_i(t))dt\Big\},
\]
for a sequence $b_n\to\iy$ with $n^{-1/2}b_n\to0$, where
$\tilde Q^n_i:=b_n^{-1}n^{-1/2}Q^n_i$ denotes the MD scaled queue length for class $i$,
and arrival and service rates both scale like $n$.
The main result is the MD scale AO
of the generalized $c\mu$ rule, with the obvious adaptation of determining the index
via $\tilde Q^n_i$ rather than $\hat Q^n_i$.

In that, this paper contributes to a line of research started in \cite{AtaBis} and continued in
\cite{ata-coh} and \cite{ata-coh-2}, that addresses QCPs at the MDHT regime.
The reader is referred to these papers for background and motivation,
as well as discussions of
similarities and dissimilarities between diffusion scale heavy traffic approximations and MDHT,
as well as between the latter and QCPs at the large deviation (LD) regime.
It has been suggested in \cite{AtaBis} that RS control at the MDHT regime is likely
to enjoy explicitly computable AO policies much more often than at the LD regime.
The papers \cite{ata-coh} and \cite{ata-coh-2} support this expectation by establishing AO
of explicitly computable policies for a complex QCP that involves a free boundary problem.
Put in this context, the contribution of the present paper further strengthens the
assertion from \cite{AtaBis} alluded to above.
In relation to this, it should be mentioned that an AO result of the generalized $c\mu$ rule
is not to be expected in the LD regime. In fact, when
$C_i$ are linear functions, it is known under some conditions
(including Markovity) that a RS control problem in the LD regime
possesses an AO index rule distinct from the classical $c\mu$ rule
(see \cite{AGS2} where a different index rule is shown to be AO in this regime).

The aforementioned paper \cite{AtaBis} studies a general RS cost structure, and attains
AO of policies described in terms of an underlying differential game.
However, the main focus in \cite{AtaBis} is to deal with
the {\it existence} of such polices,
not with the construction of ones that are in any reasonable sense described explicitly.
Indeed, the sole example of a cost function $C$ for which
an explicit AO policy is known for this setting is that where $C$ is linear \cite{AtaBis}.
It is desired to extend the family of cost functions possessing explicit AO policies,
and the contribution of this paper can indeed be viewed as such an extension
to the collection of convex functions $C$ that adhere to a linear growth condition
and certain smoothness assumptions.

Our proof technique borrows from \cite{AtaBis} results regarding this game,
as well as a general lower bound on the RS performance of the control system.
The main body of work needed to prove our AO result consists of establishing
a matching upper bound on the performance under the policy considered.

Let us finally mention that the diffusion scale heavy traffic analysis of
the generalized $c\mu$ rule has reached far beyond \cite{van}, most notably in
\cite{ManStol}, where the rule's AO was proved in a multi-server setting that allows for servers
that differ from each other in terms of the service capabilities offered to the various classes.
It is of interest to study this type of extension at the MD scale in future work.

The setting and main results appear in Section \ref{sec2}.
Specifically, Theorems \ref{mainthm1} and \ref{mainthm2} assert the AO of a preemptive
and nonpreemptive version of the rule, respectively.
Section~\ref{sec3} provides some elementary facts about the differential game,
while Sections \ref{sec41} and \ref{sec42} give the proofs of
Theorems \ref{mainthm1} and \ref{mainthm2}, respectively.

\skp

{\it Notation.}
For a positive integer $k$ and $\alpha,\beta \in \mathbb{R}^k$, $\alpha\cdot \beta$ will denote the
usual inner product, while $\|\cdot\|$ will denote the Euclidean norm. For $T>0$ and a
function $f:[0,T]\rightarrow \mathbb{R}^k$, let $\|f\|_t=\sup_{0\leq s\leq t}\|f(s)\|, t \in [0,T]$.
For $\delta >0$, the $\delta$-oscillation of $f$ is defined as
$$
\mbox{osc}_{\delta}(f)=\sup\{\|f(s)-f(t)\|:|s-t|\leq \delta,\, s,t \in [0,T]\}\,.
$$
For a function $f$ defined on $\R_+$ and $y>0$, $\text{osc}_\del(f,y)$ denotes
the $\del$-oscillation of $f|_{[0,y]}$.
Denote by $\mathcal{C}([0,T],\mathbb{R}^k)$ the space of continuous functions. The space of
functions that are right-continuous with finite left limits (RCLL) will be denoted by
$\mathcal{D}([0,T],\mathbb{R}^k)$,
and endowed with the Skorohod $J_1$ metric \cite{Bill}. It is a well known fact (see \cite{Bill})
that this metric space is Polish. Let $\AC([0,T],\R^k)$
denote the class of absolutely continuous mappings from $[0,T]$ to
$\R^k$. Let $\AC_0([0,T],\R^k)$
denote the set of $\psi\in\AC([0,T],\R^k)$ with $\psi(0)=0$.

\section{{\bf Model Description and Results}}\label{sec2}

The model consists of $d$ customer classes and a single server. An infinite capacity buffer is devoted to each customer class, where customers are queued if the server is not available to serve them upon arrival. Within each class the customers are served on first come first served basis. The server may serve only one customer at a time within a class, however, processor sharing is allowed, in the sense that the server can divide its effort among up to $d$ customers of distinct classes.

The model is defined on a complete probability space $(\Om, \mathcal{F}, \mathbb{P})$. Expectation with respect to $\mathbb{P}$ is denoted by $\mathbb{E}$. The parameters and the processes will be indexed by $n \in \N$, which will act as the scaling parameter. The arrivals occur according to independent renewal processes. Let $\mathcal{I}=\{1,\dots,d\}$. Let $\lambda^n_i, n\in\N, i\in \mathcal{I}$ be given parameters, representing the reciprocal of the mean arrival times for class-$i$ customers. Given are $d$ independent sequences $\{\IA_i(l):l\in \N\}_{i \in \mathcal{I}}$, of i.i.d. positive random variables with mean $1$ and variance $\sigma^2_{\IA,i}$. Then the number of arrivals of class-$i$ customers up to time $t$ is given by
$$A^n_i(t)=\sup \biggl\{l\geq 0:\sum_{k=1}^l\frac{\IA_i(k)}{\lambda_i^n}\leq t\biggr\},\quad t\geq 0\,.$$
Similarly, we consider another set of parameters $\mu^n_i, n\in\N, i\in \mathcal{I}$, representing the reciprocal mean service times. Also given are $d$ independent sequences $\{\ST_i(l):l\in \N\}_{i \in \mathcal{I}}$ of positive i.i.d. random variables with mean $1$ and variance $\sigma^2_{\ST,i}$. The potential service process for class-$i$ customers is given by
$$S^n_i(t)=\sup \biggl\{l\geq 0:\sum_{k=1}^l\frac{\ST_i(k)}{\mu_i^n}\leq t\biggr\},\quad t\geq 0\,.$$
Namely, $S^n_i(t)$ gives the number of class-$i$ customers served by the time the server has devoted
$t$ units of time to that class.
Now let $\{b_n\}$ be a sequence satisfying $\lim b_n = \infty$ and $\lim \frac{b_n}{\sqrt{n}}=0$.
We will refer to $b_n^{-2}$ as the {\it moderate deviation speed}.
The arrival and service parameters are assumed to satisfy the following limits as $n \rightarrow \infty$:
\begin{equation}\label{56}
\begin{split}
&\text{$\lambda^n_i/n \rightarrow \lambda_i \in (0, \infty)$ and $\mu^n_i/n \rightarrow \mu_i \in (0, \infty)$,}\\
&\text{$\tilde{\lambda}^n_i:= \frac{1}{b_n\sqrt{n}}(\lambda_i^n-n\lambda_i)\rightarrow \tilde{\lambda}_i \in (-\infty, \infty)$,}\\
&\text{$\tilde{\mu}^n_i:= \frac{1}{b_n\sqrt{n}}(\mu_i^n-n\mu_i)\rightarrow \tilde{\mu}_i \in (-\infty, \infty)$.}
\end{split}
\end{equation}
Also, the system is assumed to be critically loaded, that is, $\sum_{i=1}^d \rho_i=1$, where $\rho_i=\lambda_i/\mu_i, i \in \mathcal{I}$. For $i \in \mathcal{I}$, let $Q^n_i$ denote the number of
class-$i$ customers in the system. With $S= \{x=(x_1,\dots,x_d)\in [0,1]^d:\sum x_i \leq 1\}$, let $B^n$ be a process taking values in $S$, with the $i$th component representing the fraction of effort given to the class-$i$ customer. Then the number of class-$i$ jobs completed by time $t$ is given by
\begin{align}\label{50}
D^n_i(t):= S^n_i(T^n_i(t)),
\end{align}
where
\begin{align}
T^n_i(t)=\int_0^t B^n_i(s)ds
\end{align} is the cumulative amount of time devoted to class-$i$ customers by time $t$.
Assuming, for simplicity, that the system starts empty, we have
\begin{align}\label{ioeq}
Q^n_i(t)= A^n_i(t)-D^n_i(t)\,.
\end{align}
We regard $B^n$ as the {\it control process}. Given $n$, we say that $B^n$ is an
{\it admissible} control if it has RCLL sample paths, and
\begin{itemize}
\item it is adapted to the filtration
$$\sigma\{A^n_i(s), S^n_i(T^n_i(s)), i \in \mathcal{I}, s\leq t\},$$
\item for every $i \in \mathcal{I}$ and $t\geq 0$, one has,
\begin{align}\label{52}
Q^n_i(t)=0 \text{ implies } B^n_i(t)=0\,.
\end{align}
\end{itemize}
Denote the set of admissible controls by $\mathcal{B}^n$. An admissible control $B^n$
is said to be {\it non-preemptive} if (i) processor sharing is not allowed,
namely $B^n$ takes values in $\{x\in\{0,1\}^d:\sum x_i\le 1\}$; and (ii)
service is non-interruptible, namely
if for some $t$ and $i$ one has $B^n_i(t)=1$, then $B^n_i(u)=1$ for $u\in[t,\tau)$,
where $\tau$ is the time of next departure,
$\tau:=\inf\{s\ge t:\Del^n(s)>\Del^n(t)\}$, $\Del^n:=\sum_jD^n_j$.
Denote the set of nonpreemptive admissible controls by $\mathcal{B}^{\#,n}$.
Clearly, $\mathcal{B}^{\#,n}\subset\mathcal{B}^n$.

We now introduce the centered and scaled versions of the processes,
\begin{align}\label{51}
\begin{split}
&\tilde{A}^n_i(t)= \frac{1}{b_n\sqrt{n}}(A^n_i(t)-\lambda^n_i t), \quad \tilde{S}^n_i(t)= \frac{1}{b_n\sqrt{n}}(S^n_i(t)-\mu^n_i t),\\
&\tilde{Q}^n_i(t)= \frac{1}{b_n\sqrt{n}}Q^n_i(t)\,.
\end{split}
\end{align}
Then it follows from \eqref{ioeq} that,
\begin{align}\label{systeq}
\tilde{Q}^n_i(t)=y^n_i t + \tilde{A}^n_i(t) - \tilde{S}^n_i(T^n_i(t))+ Z^n_i(t),
\end{align}
where
\begin{align}
Z^n_i(t)= \frac{\mu^n_i}{n}\frac{\sqrt{n}}{b_n}(\rho_i t - T^n_i(t)), \quad y^n_i = \tilde{\lambda}^n_i - \rho_i \tilde{\mu}^n_i\,.
\end{align}
The scaled processes $(\tilde{A}^n, \tilde{S}^n)$ are assumed to satisfy a moderate deviation principle. For that let $I_k, k=1,2$, be functions defined on $\mathcal{D}([0,T],\mathbb{R}^d)$ as follows.
Denote $\AC_0=\AC_0([0,T],\R^d)$.
For $\psi = (\psi_1,\dots, \psi_d) \in \mathcal{D}([0,T],\mathbb{R}^d)$,
\[
\mathbb{I}_1(\psi)=\begin{cases}
\ds
\sum_{i=1}^d\frac{1}{2\hat\sig_{1,i}^2}\int_0^T\dot{\psi}^2_i ds,
&\text{if } \psi\in\AC_0,\\
\infty, &\mbox{otherwise},
\end{cases}
\qquad
\mathbb{I}_2(\psi)=\begin{cases}
\ds
\sum_{i=1}^d\frac{1}{2\hat\sig_{2,i}^2}\int_0^T\dot{\psi}^2_i ds,
&\mbox{if } \psi\in\AC_0,\\
\infty, &\mbox{otherwise},
\end{cases}
\]
where
$\hat\sig_{1,i}^2=\lambda_i\sigma^2_{\IA,i}$, $\hat\sig_{2,i}^2=\mu_i\sigma^2_{\ST,i}$.
Let $\mathbb{I}(\psi)= \mathbb{I}_1(\psi^{(1)})+ \mathbb{I}_2(\psi^{(2)})$ for $\psi=(\psi^{(1)},\psi^{(2)}) \in \mathcal{D}([0,T],\mathbb{R}^{2d})$. Note that $\mathbb{I}$ is lower semi-continuous and has compact level sets, hence is a good rate function.

\begin{assumption}\label{A1} The sequence $(\tilde{A}^n, \tilde{S}^n)$ satisfies the LDP with
speed $b_n^{-2}$ and rate function $\mathbb{I}$ in $\mathcal{D}([0,T],\mathbb{R}^{2d})$, that is:\\
$\bullet$ for any open set $G \subset \mathcal{D}([0,T],\mathbb{R}^{2d})$
$$\liminf \frac{1}{b_n^2}\log\mathbb{P}((\tilde{A}^n, \tilde{S}^n)\in G)\geq -\inf_{\psi \in G}\mathbb{I}(\psi);$$
$\bullet$ for any closed set $F \subset \mathcal{D}([0,T],\mathbb{R}^{2d})$
$$\limsup \frac{1}{b_n^2}\log\mathbb{P}((\tilde{A}^n, \tilde{S}^n)\in F)\leq -\inf_{\psi \in F}\mathbb{I}(\psi).$$
\end{assumption}
\noi
A sufficient condition for this assumption is the existence of finite exponential
moments for the random variables $\IA_i(1)$ and $\ST_i(1)$ (a precise statement
is provided below, in Assumption \ref{A3}).
For a proof, as well as considerably weaker sufficient conditions, see \cite{PW}.

The cost to be considered is defined in terms of functions $C_i, i \in \mathcal{I}$ that are strictly increasing, strictly convex, non-negative, continuously differentiable functions from $\mathbb{R}_+$ to $\mathbb{R}_+$, satisfying $C_i(0)=C_i^{\prime}(0)=0$. A linear growth condition is assumed,
namely $C_i(x)\le u_1x$ for all $x\in\R_+$, $i\in\calI$, where $u_1$ is a constant.
Denote $C(x)=\sum_iC_i(x_i)$, where $x=(x_1,\ldots,x_d)$.
For more about the assumptions on $C$, see Remark \ref{rem1} below.
Given $n$, the cost, that is of RS type,
associated with the control $B^n \in \mathcal{B}^n$ is given by
\begin{align}
J^n(B^n)=\frac{1}{b_n^2}\log\mathbb{E}\biggl[\exp\biggl(b_n^2\displaystyle\int_0^T
C(\tilde{Q}^n(t))dt\biggr)\biggr]\,.
\end{align}
The two value functions of interest are given by
\begin{align*}
V^n = \inf_{B^n \in \mathcal{B}^n}J^n(B^n)\,,
\qquad
\hat{V}^n = \inf_{B^n \in \mathcal{B}^{\#,n}}J^n(B^n)\,.
\end{align*}

Now we describe the differential game which will govern the asymptotic behavior of the above control problem. Let $\theta = (\mu_1^{-1},\ldots, \mu_d^{-1})$ and let $y=(y_1,\ldots,y_d)$ where $y_i=\tilde{\lambda}_i-\rho_i\tilde{\mu}_i$. Denote $\mathcal{P}=\mathcal{C}_0([0,T],\mathbb{R}^{2d})$, the set of continuous functions starting from $0$, and
$$\mathcal{E}=\{\zeta \in \mathcal{C}([0,T],\mathbb{R}^d):\theta\cdot\zeta\,\,\, \mbox{starts from zero and is non-decreasing}\}\,.$$
Endow both the spaces with the uniform topology. Let $\rho$ be the mapping from $\mathcal{D}([0,T],\mathbb{R}^d)$ into itself defined by
$$\rho[\psi]_i(t)=\psi_i(\rho_i t),\quad t \in [0,T],\,\,i\in \mathcal{I}\,.$$
Given $\psi=(\psi^{(1)},\psi^{(2)}) \in \mathcal{P}$ and $\zeta \in \mathcal{E}$, the {\it dynamics for
the data $(\psi, \zeta)$} is defined as
\begin{align}\label{dyn}
\varphi=y \io + \psi^{(1)}-\rho[\psi^{(2)}]+\zeta,
\end{align}
where $\io:[0,T]\to[0,T]$ is the identity map.
The dynamics are considered with the constraint
\begin{align}\label{nonneg}
\varphi_i(t)\geq 0, \quad t\geq 0, i\in\mathcal{I}\,.
\end{align}
The game is defined in the sense of Elliot and Kalton \cite{EllKal}, for which we need the notion of strategies. A measurable mapping $\alpha:\mathcal{P}\rightarrow \mathcal{E}$ is called a {\it strategy} if it satisfies a causality property. Namely, for every $\psi=(\psi^{(1)},\psi^{(2)}), \tilde{\psi}=(\tilde{\psi}^{(1)},\tilde{\psi}^{(2)}) \in \mathcal{P}$ and $t \in [0,T]$,
$$(\psi^{(1)},\rho[\psi^{(2)}])(s)=(\tilde{\psi}^{(1)},\rho[\tilde{\psi}^{(2)}])(s)\quad \forall s \in [0,t] \quad \mbox{implies}\,\,\quad \alpha[\psi](s)=\alpha[\tilde{\psi}](s)\,\,\quad\forall s \in [0,t]\,.$$
A strategy $\alpha$ is said to be {\it admissible} if, whenever $\psi \in
\mathcal{P}$ and $\zeta=\alpha[\psi]$, the corresponding dynamics \eqref{dyn} satisfies the non-negativity constraint \eqref{nonneg}. We denote the set of all admissible strategies by $\mathcal{A}$. Then
the cost associated with $(\psi, \zeta) \in \mathcal{P}\times \mathcal{E}$ is given by
$$
c(\psi,\zeta)=\int_0^T C(\ph(t))dt-\mathbb{I}(\psi),
$$
where $\ph$ is the dynamics for the data $(\psi,\zeta)$. The value of the game is defined as
$$V=\inf_{\alpha \in \mathcal{A}}\sup_{\psi \in \mathcal{P}}c(\psi,\alpha[\psi])\,.$$
Before stating the main result we require one more assumption.
\begin{assumption}\label{A2}
For any constant $u$,
$$\limsup_{n\rightarrow \infty}\frac{1}{b_n^2}\log\mathbb{E}\biggl[e^{b_n^2u(\|\tilde{A}^n\|_T+\|\tilde{S}^n\|_T)}\biggr]<\infty\,.$$
\end{assumption}
\noi
Again, this assumption holds when $\IA_i(1)$ and $\ST_i(1)$ have finite exponential moments.
More precisely, consider
\begin{assumption}\label{A3}
There exists a $u_0 > 0$ such that $\mathbb{E}[e^{u_0\IA_i(1)}]$ and $\mathbb{E}[e^{u_0\ST_i(1)}]$, $i \in \mathcal{I}$ are finite.
\end{assumption}
\noi
Then Assumption \ref{A3} is a sufficient condition for
Assumption \ref{A1} (by Theorem 6.2 of \cite{PW})
and for Assumption \ref{A2} (by Proposition 2.1 of \cite{AtaBis}).

Now we define a particular control that will be referred to as the {\it preemptive
generalized $c\mu$ rule}. This policy gives preemptive priority to
the class $i$ for which
$\mu_iC_i^{\prime}(\tilde{Q}^n_i)\geq \mu_jC_i^{\prime}(\tilde{Q}^n_j)$
for all $j$, where ties are broken in some predefined manner.
To define it precisely we need some additional notation.
Given a set of $d$ real numbers $A=\{\al_i,i\in\calI\}$,
denote $\argmax A=\{i:\al_i\ge\max_j\al_j\}$, and
let $\argmax^*A$ be the smallest member of
$\argmax A$.
The control, that we denote by $B^{*,n}$, is defined by setting
\begin{align}\label{control}
B^{*,n}_i(t)=1_{\{\tilde{Q}^n(t)\in\calQ_i\}}\,, \qquad i\in\calI,
\end{align}
where $\calQ_i$, $i\in\calI$ partition $\R_+^d\setminus\{0\}$ according to
\begin{equation}\label{58}
\calQ_i=\{q\in\R_+^d\setminus\{0\}:\textstyle\argmax^*\{\mu_jC'_j(q),j\in\calI\}=i\},\qquad i\in\calI.
\end{equation}
(Thus, in case of a tie, priority is given to the lowest index.)
Note that if, for some $i$, $q\in\calQ_i$, we have $q_i>0$ thanks to the assumption that,
for all $i$, $C'_i(x)=0$ iff $x=0$; as a result, the queue selected for service is nonempty.

It is easy to see that equation \eqref{control}, along with equations \eqref{50}--\eqref{ioeq}
and \eqref{51} uniquely define the processes $B^n=B^{*,n}$, $D^n$, $T^n$, $Q^n$
and $\tilde Q^n$, based on the data $(A^n,S^n)$. Moreover,
the process $B^{*,n}$ thus defined is an admissible control.

It follows from the results of \cite{AtaBis} that, under Assumptions \ref{A1} and \ref{A2}, \begin{equation}
\label{10}
\lim_{n\rightarrow \infty}V^n=V.
\end{equation}
While this result is crucial in validating the heuristic that the asymptotics are
governed by the differential game, it leaves open the important problem of finding
explicitly computable AO policies. This problem has not been treated in \cite{AtaBis}
beyond the linear case. Our goal here is to fill in this gap for the family
of costs described above.

In view of \eqref{10},
a sequence of control policies $B^n$ is said to be {\it asymptotically optimal}
if it achieves the limit, namely,
\begin{align}\label{ao}
\lim_{n\rightarrow \infty}J^n(B^n)=V\,.
\end{align}
The first main result that we prove is the asymptotic optimality of the preemptive
generalized $c\mu$ rule.
\begin{thm}\label{mainthm1}
Let Assumptions \ref{A1} and \ref{A2} hold. Then $B^{*,n}$ is asymptotically optimal.
\end{thm}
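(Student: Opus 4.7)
The convergence $V^n\to V$ from \eqref{10}, combined with the trivial inequality $V^n \le J^n(B^{*,n})$, reduces Theorem~\ref{mainthm1} to the matching upper bound $\limsup_n J^n(B^{*,n}) \le V$. The plan has three parts: lift the preemptive $c\mu$ rule to an admissible strategy $\alpha^*\in\calA$ in the differential game, show that $\alpha^*$ is optimal in the game (so $\sup_\psi c(\psi,\alpha^*[\psi])\le V$), and then show via a Laplace--Varadhan argument that the stochastic performance of $B^{*,n}$ is asymptotically bounded above by this game value.

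Given $\psi=(\psi^{(1)},\psi^{(2)})\in\calP$, the strategy $\alpha^*[\psi]=\zeta$ and the corresponding trajectory $\varphi$ are defined jointly by the dynamics \eqref{dyn}--\eqref{nonneg} together with the rule that, $d(\theta\cdot\zeta)$-a.e., the increment of $\theta\cdot\zeta$ is allocated to the class $i(t)=\argmax^*\{\mu_jC_j'(\varphi_j(t)):j\in\calI\}$. This is an oblique Skorohod problem with state-dependent reflection directions on $\R_+^d$ whose unique pathwise solvability and causality follow from a standard time-stepping argument, placing $\alpha^*\in\calA$. For game optimality, fix $\psi$ and observe that every $\alpha\in\calA$ produces some admissible-for-$\psi$ control $\zeta=\alpha[\psi]$, so it suffices to show that $\alpha^*[\psi]$ minimizes $\int_0^TC(\varphi)\,dt$ over all admissible-for-$\psi$ $\zeta$. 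This is the pathwise optimality of the generalized $c\mu$ rule for convex holding costs in the spirit of \cite{van}: using the strict monotonicity and strict convexity of each $C_i$, an exchange argument shows that moving any infinitesimal allocation from class $i(t)$ to a class $j$ with $\mu_jC_j'(\varphi_j(t))<\mu_{i(t)}C_{i(t)}'(\varphi_{i(t)}(t))$ strictly increases the total cost, so the greedy allocation is globally optimal.

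For the stochastic upper bound, combining Assumption~\ref{A1} with the exponential tightness provided by Assumption~\ref{A2} and the linear growth of $C$ should yield
\[
\limsup_n J^n(B^{*,n}) \le \sup_{\psi\in\calP}\Bigl[\int_0^TC(\varphi^*(\psi)(t))\,dt - \mathbb{I}(\psi)\Bigr] = \sup_\psi c(\psi,\alpha^*[\psi]) \le V,
\]
where $\varphi^*(\psi)$ denotes the game trajectory produced by $\alpha^*$ on input $\psi$ and the last inequality is the game optimality above. The main obstacle is establishing the first inequality: the raw allocation \eqref{control} is discontinuous in $\tilde Q^n$ at ties of the indices $\mu_jC_j'(\cdot)$, so the closed-loop map $(\tilde A^n,\tilde S^n)\mapsto\tilde Q^n$ is not continuous and a direct contraction-principle plus Varadhan argument is unavailable. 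The plan is to approximate $B^{*,n}$ by a family of policies in which ties are resolved by an $\eps$-strict preference (thereby rendering the closed-loop map continuous along the LDP-typical paths), establish the Varadhan bound for the approximating policy by a genuine continuous-mapping argument, and then send $\eps\downarrow 0$, controlling the resulting error uniformly in $n$ via Assumption~\ref{A2} and the linear growth of $C$. Techniques for passing the MDHT LDP through such non-smooth policy maps have been developed in \cite{ata-coh,ata-coh-2}, and those should be directly adaptable.
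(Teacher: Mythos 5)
Your reduction to $\limsup_n J^n(B^{*,n})\le V$ and your identification of the tie-discontinuity of the closed-loop map as the central obstacle are both correct, but the route you propose has gaps that the paper's actual argument avoids. First, the paper does not construct a new ``$c\mu$-type'' game strategy $\alpha^*$ via an oblique Skorohod problem; it simply reuses the explicit minimizing strategy $\hm{\zeta}$ of Section~\ref{sec3}, whose optimality is already known from Proposition~3.1 of \cite{AtaBis}. Your proposed $\alpha^*$, which only allocates positive increments of $\theta\cdot\zeta$ to the class with the largest index, cannot \emph{decrease} any component of $\zeta$, whereas the paper's strategy $\hm\zeta=\hm\varphi-\hm\xi$ does so whenever needed to keep $\varphi$ exactly on the minimizing curve $f(\mathbf w)$. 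When the adversary's $\psi$ pushes some $\varphi_j$ above $f_j(\mathbf w)$, your greedy reflection has no mechanism to correct it, so it is not at all clear that $\sup_\psi c(\psi,\alpha^*[\psi])=V$; the infinitesimal-exchange heuristic is a static argument that does not settle this dynamic, state-constrained problem (the allocation at time $t$ changes the state and hence all future indices). Likewise, the claim that the oblique Skorohod problem with directions discontinuous at ties is ``uniquely pathwise solvable by a standard time-stepping argument'' is itself a serious open issue: that is precisely the kind of Skorohod problem for which uniqueness can fail, and the paper sidesteps it by working through the scalar workload $\theta\cdot\tilde Q^n=\Gamma[\theta^n\cdot Y^n]$ and the continuous curve $f$.

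Second, the paper does not soften the policy by an $\eps$-strict tie-breaking and then pass a continuous-mapping argument to the limit. Instead it covers the sublevel set $\mathcal{D}_\Delta$ by finitely many Skorohod balls $\mathcal{A}^k$, fixes the corresponding game data $\psi^k$ and trajectory $\varphi^k=f(\mathbf w[\psi^k])$, and proves directly that on the event $\Omega^n_k=\{(\tilde A^n,\tilde S^n)\in\mathcal{A}^k\}$ the deviation $G^n_i=\frac{\theta^n_i}{\theta_i}\tilde Q^n_i-f_i(\theta^n\cdot\tilde Q^n)$ can never drop below $-\eps$. The crux is a contradiction argument: if $G^n_i$ dipped below $-\eps$ over some $[\sigma^n,\tau^n]$, then on that interval class $i$ has the strictly smallest index and receives no service, so its queue cannot decrease; on a short interval the workload oscillation is too small to produce the required jump in $f_i(\theta^n\cdot\tilde Q^n)$, and on a long interval the deterministic drift term $\frac{\mu^n_i}{n}\frac{\sqrt n}{b_n}\rho_i(\tau^n-\sigma^n)$ diverges. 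Combining with the Lipschitz continuity of the one-dimensional Skorohod map and the (uniform on bounded sets) continuity of $f$ gives $\|\tilde Q^n-\varphi^k\|_T\le c_1\eps$ on $\Omega^n_k$, which is exactly what is needed to run Step~5 of Theorem~4.2 of \cite{AtaBis}. This path-by-ball estimate is what your outline is missing, and it is also what renders the policy's tie discontinuities harmless without any approximation or limit in $\eps$.
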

The nonpreemptive version of the generalized $c\mu$ rule is a control, denoted
$B^{\#,n}$, that upon completion of a job selects a customer from the class $i$ for which
$\tilde Q^n\in\calQ_i$. Namely, if $\tau$ is any time of departure (a jump time
of the process $\Del^n$), then $B^{\#,n}(\tau)=1_{\{\tilde Q^n(\tau)\in\calQ_i\}}$.
Note that the job departing at time $\tau$ is not counted in $\tilde Q^n(\tau)$,
due to right-continuity. Also note that if the system is empty right after a departure,
the above definition sets $B^{\#,n}(\tau)=0$, hence it is consistent with \eqref{52}.
One must also mention the non-idling condition:
when a customer is admitted into an empty system, it
is immediately served. Again, this set of conditions along with the equations
alluded to above uniquely define the processes involved; these details are skipped.

Our second main result requires slightly stronger assumptions.
\begin{thm}\label{mainthm2}
Let Assumption \ref{A3} hold. Then
$\displaystyle\lim_{n\rightarrow \infty}\hat{V}^n = V$. Furthermore,
$B^{\#,n}$ is asymptotically optimal.
\end{thm}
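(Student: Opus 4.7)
The lower bound $\liminf_n\hat V^n\ge V$ is immediate from the inclusion $\calB^{\#,n}\subset\calB^n$, which yields $\hat V^n\ge V^n$, together with the limit \eqref{10}. For the matching upper bound, in view of Theorem \ref{mainthm1} it suffices to show that $J^n(B^{\#,n})-J^n(B^{*,n})\to 0$. My plan is to couple the two policies on a single probability space using common data $(A^n,S^n)$, and then to show that the coupled queue length processes differ by an amount negligible at the moderate deviation scale.

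The pathwise comparison runs as follows. Since the nonpreemptive rule differs from the preemptive one only through switching delays, each bounded by a single residual service time, setting
\[
R^n=\max_{j\in\calI}\max_{k\le S_j^n(T)}\frac{\ST_j(k)}{\mu_j^n},
\]
one expects $\|T_i^{\#,n}-T_i^{*,n}\|_T\le C_0 R^n$ for all $i$, modulo a bounded propagation effect when the two coupled queues temporarily disagree on the priority index $\mu_iC_i'(\tilde Q_i^n)$. Using $D_i^n=S_i^n\circ T_i^n$ together with an MD oscillation estimate for the renewal process $S_i^n$ (of the type used in the proof of Theorem \ref{mainthm1}), this yields
\[
\|\tilde Q^{\#,n}-\tilde Q^{*,n}\|_T\le C_1 R^n\sqrt n/b_n+\eta^n,
\]
with $\eta^n$ an MD-negligible remainder.

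Assumption \ref{A3} is used next to control $R^n$ at the MD scale. A Chernoff bound combined with a union bound over the $O(n)$ service epochs in $[0,T]$ gives $\PP(R^n>\eps b_n/\sqrt n)\le C n\exp(-c\eps b_n\sqrt n)$; since $b_n/\sqrt n\to 0$ implies $b_n\sqrt n\gg b_n^2$, this probability is dominated by $e^{-Kb_n^2}$ for any fixed $K$ once $n$ is large. Convexity of $C_i$ on $\R_+$ combined with $C_i(0)=0$ and the linear growth $C_i(x)\le u_1 x$ forces $C_i'\le u_1$, so each $C_i$ is $u_1$-Lipschitz; hence
\[
\PP\Bigl(\bigl|\textstyle\int_0^T[C(\tilde Q^{\#,n})-C(\tilde Q^{*,n})]dt\bigr|>\eps\Bigr)\le e^{-Kb_n^2}
\]
for any fixed $\eps,K>0$ and all large $n$.

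The final step converts this into a bound on the risk-sensitive functional. Splitting $\EE[\exp(b_n^2\int_0^T C(\tilde Q^{\#,n})dt)]$ along the good event and its complement, the good-event contribution is bounded by $e^{\eps b_n^2}\EE[\exp(b_n^2\int_0^T C(\tilde Q^{*,n})dt)]$, whose normalized logarithm tends to $\eps+V$ by Theorem \ref{mainthm1}. The bad-event contribution is handled by Cauchy--Schwarz, combining the crude pathwise bound $\|\tilde Q^{\#,n}\|_T\le T\|y^n\|+\|\tilde A^n\|_T$, Assumption \ref{A2} (implied by \ref{A3}) applied to the squared exponential, and the tail estimate above with $K$ chosen large. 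Taking $\eps\downarrow 0$ gives $\limsup_n J^n(B^{\#,n})\le V$, completing the proof. The main technical obstacle is the pathwise comparison, specifically ruling out amplification of the priority-index disagreement across successive switching epochs; I expect a careful inductive argument over the sequence of job completion epochs, exploiting the fact that the priority regions $\calQ_i$ depend only continuously on the state, to suffice.
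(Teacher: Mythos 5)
Your lower bound is the same as the paper's. For the upper bound, however, you take a genuinely different route from the paper, and that route has a real gap.

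The paper never compares the nonpreemptive system to the preemptive one. Instead, it re-runs the argument of Theorem~\ref{mainthm1} directly on $B^{\#,n}$: the same covering events $\Om^n_k$, the same target estimates \eqref{est1}--\eqref{est2}, the same contradiction argument via $\sig^n,\tau^n$. The only new obstacle is that \eqref{62} no longer yields $B^n_i=0$ on $[\sig^n,\tau^n]$, because a class-$i$ job may already be in service at time $\sig^n$. But this is at most one job. In Case~1 this costs $\tilde Q^n_i(\tau^n)-\tilde Q^n_i(\sig^n)\ge -b_n^{-1}n^{-1/2}$, which is absorbed by the deliberate slack between $\eps/2$ and $\eps/4$ in \eqref{63}; in Case~2 it costs $\frac{\mu^n_i}{n}\frac{\sqrt n}{b_n}\rho_i\ST^n_i$ in \eqref{est12}, which is controlled by the tail estimate \eqref{64} under Assumption~\ref{A3}. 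That is the entirety of the modification.

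Your coupling argument, by contrast, requires a pathwise comparison between $\tilde Q^{\#,n}$ and $\tilde Q^{*,n}$, and the claim $\|T^{\#,n}_i-T^{*,n}_i\|_T\le C_0R^n$ ``modulo a bounded propagation effect'' is precisely the hard part, not a modulo. The index rule $\argmax^*\{\mu_jC'_j(\tilde Q^n_j)\}$ is a discontinuous function of the state: near a tie surface, a discrepancy of a single job can flip the chosen class, after which the two systems serve different classes and can accumulate $O(1)$ discrepancy in cumulative service times $T^n_i$ before they re-synchronize. Nothing in your sketch rules out this amplification; you acknowledge it and defer it to ``a careful inductive argument,'' but no such induction is supplied, and it is far from clear that one closes, because the estimate you would need (that the disagreement time between the two systems is $O(R^n)$) is not a consequence of $R^n$ being small. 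The paper sidesteps the issue entirely by never forming the difference of the two systems. Your tail bound on $R^n$ and the Cauchy--Schwarz treatment of the bad event are both fine and indeed parallel the paper's \eqref{64} and its use of Assumption~\ref{A2}/\ref{A3}; the gap is solely in the pathwise comparison step, which is the load-bearing step of your plan.
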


\begin{rem}
  \label{rem1}
{\bf (a)} The linear growth condition on $C$ is required to assure finiteness of the cost $J^n$.
The finiteness of $J^n$ is indeed guaranteed by means of Assumption \ref{A2}, by appealing to the linear growth condition on $C$ and the relation \eqref{systeq}. Furthermore, the linear growth condition is also used in step 5 in the proof of Theorem 4.2 in \cite{AtaBis}. The conditions on $C^{\prime}$ are necessary for the existence of a continuous minimizing curve (see Lemma 3.1) which plays the central role in the solution of the limiting differential game. Such conditions on $C^{\prime}$ have appeared in \cite{ManStol}, which has established the heavy-traffic AO of the generalized $\boldsymbol{c\mu}$ rule in the multi-server setup.

{\bf (b)} Functions $C$ that satisfy our assumptions include
\[
C(x)=\sum_i(a_i+b_ix_i^{p_i})^{1/{p_i}}-\sum_ia_i^{1/p_i},
\]
for constants $a_i>0$, $b_i>0$, $p_i>1$.
For example, one may take $C(x)=\sum_i(1+b_ix_i^2)^{1/2}-d$.
\end{rem}

\section{\bf Solution of the Game}\label{sec3}

In this section we give a minimizing strategy for the game described in the previous section. For that we require the following lemma about the existence of a continuous minimizing curve.
\begin{lem}\label{cont}
There exists a continuous function $f:\mathbb{R}_+\rightarrow \mathbb{R}^d_+$ such that
\begin{align}\label{minimizer}
\begin{cases} \theta\cdot f(w)=w,\\
 C(f(w))= \inf\{C(q):q \in \mathbb{R}^d_+,\theta\cdot q=w\},
\end{cases}
\qquad w\in\R_+.
\end{align}
This function satisfies
\begin{equation}\label{57}
\mu_1C^{\prime}_1(f_1(w))=\cdots=\mu_dC^{\prime}_d(f_d(w)),\qquad w\in\R_+.
\end{equation}
\end{lem}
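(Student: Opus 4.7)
The plan is to combine compactness and strict convexity for existence and uniqueness of the minimizer, a Lagrange multiplier analysis for \eqref{57}, and a standard parametric optimization argument for continuity. For each fixed $w \ge 0$, the constraint set $K_w := \{q \in \R_+^d : \theta \cdot q = w\}$ is a nonempty compact subset of $\R^d$ (a scaled simplex), and $C = \sum_i C_i$ is continuous and, by the assumed strict convexity of the $C_i$, strictly convex on $K_w$. Hence a unique minimizer exists, which defines $f(w)$; the identity $\theta \cdot f(w) = w$ is immediate, and $f(0) = 0$ trivially.

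Next, for $w > 0$ I would show $f_i(w) > 0$ for every $i$. Suppose to the contrary that $f_i(w) = 0$ for some $i$; since $w > 0$, there is $j$ with $f_j(w) > 0$. For small $\eps > 0$ the feasible perturbation $q_i = \eps$, $q_j = f_j(w) - \eps \mu_j/\mu_i$, $q_k = f_k(w)$ otherwise gives
\[
C(q) - C(f(w)) = C_i(\eps) - \eps \frac{\mu_j}{\mu_i} C_j'(f_j(w)) + o(\eps),
\]
which is strictly negative for small $\eps$, since $C_i'(0) = 0$ forces $C_i(\eps) = o(\eps)$ and since $C_j'$ strictly increasing with $C_j'(0) = 0$ gives $C_j'(f_j(w)) > 0$. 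This contradicts optimality, so $f(w)$ lies in the relative interior of $K_w$ within $\R_+^d$. A standard Lagrange multiplier argument then produces $\la = \la(w) \ge 0$ satisfying $C_i'(f_i(w)) = \la/\mu_i$ for all $i$, which is \eqref{57}. (For $w = 0$, both sides of \eqref{57} vanish.)

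For continuity, fix $w_n \to w$ in $\R_+$. The sequence $\{f(w_n)\}$ is bounded because $f_i(w_n) \le \mu_i w_n$; take any convergent subsequence $f(w_{n_k}) \to q^* \in K_w$. For arbitrary $q \in K_w$, approximate by feasible competitors $q^{(k)} \in K_{w_{n_k}}$ (scale $q$ by $w_{n_k}/w$ when $w > 0$, or take $q^{(k)} = 0$ when $w = 0$) and pass to the limit in $C(f(w_{n_k})) \le C(q^{(k)})$ to obtain $C(q^*) \le C(q)$; uniqueness then gives $q^* = f(w)$, so the full sequence converges and $f$ is continuous. The step I expect to be least routine is the boundary argument ruling out $f_i(w) = 0$ for $w > 0$, which is where the hypothesis $C_i'(0) = 0$ plays its role in a subtle way (it makes ``opening'' an empty class cheap to first order, so the putative corner minimum can always be improved); the remainder is standard convex analysis and Berge-type reasoning.
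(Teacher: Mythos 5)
Your proof is correct, but it runs in the opposite direction from the paper's. The paper constructs $f$ by first solving the equal-index system: for each level $c\in[0,M_1)$ (with $M_i=\mu_i\|C_i'\|_\infty$, which is finite by the linear growth bound on $C$) it defines $q^c$ by $\mu_iC_i'(q_i^c)=c$, observes that $F(c)=\theta\cdot q^c$ is continuous, strictly increasing and surjects onto $\R_+$, inverts to get $q^w$, and only afterward invokes Lagrange multipliers plus strict convexity to certify that this $q^w$ is the minimizer in \eqref{minimizer}; continuity is then proved by a direct $\eps$--$\delta$ argument that exploits the identity $\mu_iC_i'(q_i^w)=\mu_jC_j'(q_j^w)$ to derive a contradiction from a large coordinate gap. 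You instead define $f(w)$ as the minimizer of $C$ over the compact simplex $K_w$ (existence and uniqueness from compactness and strict convexity), rule out the boundary via the perturbation argument that exploits $C_i'(0)=0$, and then \emph{derive} \eqref{57} from the KKT conditions; continuity is obtained by a Berge-type subsequence argument. Both routes are sound. Your version is arguably more modular and does not use the linear growth condition (which the paper needs only to make $\|C_i'\|_\infty$ finite for its $F(c)\to\infty$ step), whereas the paper's is more explicitly constructive and its continuity proof avoids subsequence extraction. One small slip in your write-up: in the continuity argument at $w=0$, the competitor $q^{(k)}=0$ need not lie in $K_{w_{n_k}}$ when $w_{n_k}>0$; but no competitor is needed there, since $K_0=\{0\}$ forces any subsequential limit $q^*$ with $\theta\cdot q^*=0$, $q^*\ge 0$ to equal $0=f(0)$ directly from the bound $f_i(w_{n_k})\le\mu_i w_{n_k}$.
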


\begin{proof}
First of all note that, by our assumptions, $C_i^{\prime}$ are continuous strictly increasing bounded functions, null at zero, for each $i$. Let $M_i= \mu_i\|C^{\prime}_i\|_{\infty}$. Without loss of generality let $M_1=\min M_i$. Then for any $c \in [0,M_1)$ there exists a $q^c=(q^c_1,\ldots,q^c_d)$ such that
$$
(\mu_1C^{\prime}_1(q^c_1),\ldots,\mu_dC^{\prime}_d(q^c_d))=c(1,\ldots,1)\,.
$$
Moreover, $q^0=(0,\ldots,0)$.
Now consider the function $F:[0,M_1)\rightarrow \mathbb{R}_+$ defined as
$$F(c)=\theta\cdot q^c\,.$$
Recall $\theta_i=\mu_i^{-1}$.
By our assumptions, $F$ is continuous, $F(0)=0$ and $\lim_{c\rightarrow M_1}F(c)=\infty$. So by the intermediate value theorem, for any $w \in \mathbb{R}_+$, there exists a $c$, such that $F(c)=w$.
Note that the strict monotonicity of $C'_i$ implies that of $c\mapsto q_i^c$, hence
that of $F$. As a result, the solution $c$ to $F(c)=w$ is unique.
Hence, for every $w \in \mathbb{R}_+$, there exists a unique member of $\R_+^d$,
that with an abuse of notation we denote by $q^w$, satisfying
\begin{equation}\label{54}
\mu_1C^{\prime}_1(q^w_1)=\cdots=\mu_dC^{\prime}_d(q^w_d)
\quad \mbox{and} \quad \theta \cdot q^w=w\,.
\end{equation}
Let $f:\mathbb{R}_+\rightarrow \mathbb{R}^d_+$ be defined
by $f(w)=q^w$. Note that, because of the facts that $C_i^{\prime}(0)=0$ and $C_i^{\prime}$ is strictly increasing, it follows from \eqref{54} that $f_i(w)>0$ whenever $w>0$. Thus by a Lagrange multiplier argument and by the strict convexity assumption we obtain that $f$ satisfies \eqref{minimizer}. Relation \eqref{57} holds by \eqref{54}.

Next we prove the continuity of $f$. Denote $\theta_{\min}=\min_i\theta_i$.
For that fix $\eps > 0$, and take $\delta = \eps\theta_{\min}/2$. We claim that
$|w-w^{\prime}|< \delta$ implies $\sum|q_i^w-q_i^{w^{\prime}}|< \eps$. Suppose not. Then there
exist $w$ and $w'$ with $0<w-w'<\del$ and $\sum|q^w_i-q^{w'}_i|\ge\eps$.
By the identity $w=\theta\cdot f(w)$, we have $w-w'=\sum\theta_i q^w_i - \sum \theta_iq_i^{w^{\prime}} < \delta$. But $\sum|\theta_i(q^w_i-q^{w'}_i)|\ge\eps\theta_{\min}=2\del$.
Note that if $a_i$ are any constants satisfying $0<\sum a_i<\del$ and $\sum|a_i|\ge2\del$
then there exist $i$ and $j$ for which $a_i<0<a_j$.
Therefore there must exist $i$ and $j$, such that $q_i^w> q_i^{w^{\prime}}$ and
$q_j^{w^{\prime}} > q_j^w$.
By the monotonicity of $C^{\prime}_i$ for each $i$, we have
$$C^{\prime}_i(q_i^w)>C^{\prime}_i(q_i^{w^{\prime}}) \quad \mbox{and} \quad
C^{\prime}_j(q_j^w)<C^{\prime}_j(q_j^{w^{\prime}})\,.$$ But this contradicts
\[
\mu_iC'_i(q_i^w)=\mu_jC'_j(q_j^w),\qquad \mu_iC'_i(q_i^{w'})=\mu_jC'_j(q_j^{w'}),
\]
which follows from \eqref{54}.
Hence we have proved the claim.
\end{proof}
Next we describe the solution of the game. For that consider the one-dimensional Skorohod map $\Gam$ from $\mathcal{D}([0,T],\mathbb{R})$ to itself given by
\begin{equation}\label{55}
\Gam[z](t)=z(t)-\inf_{s \in [0,t]}[z(s)\wedge0], \quad t \in [0,T]\,.
\end{equation}
From \eqref{dyn}, for $\psi=(\psi^{(1)},\psi^{(2)}) \in \mathcal{P}$ and $\zeta \in \mathcal{E}$, the dynamics of the differential game is given by $\varphi=\xi+\zeta$, where
$$\xi=y\io+\psi^{(1)}-\rho[\psi^{(2)}].
$$
We associate with each $\psi \in \mathcal{P}$ a $4$-tuple $(\hm{\varphi}[\psi],\hm{\xi}[\psi],\hm{\zeta}[\psi],\textbf{w}[\psi])$ given by
\begin{align*}
&\hm{\xi}[\psi]=y\io + \psi^{(1)}-\rho[\psi^{(2)}],&
\textbf{w}[\psi]=\Gam[\theta\cdot \hm{\xi}[\psi]],
\\
&\hm{\varphi}[\psi]=f(\textbf{w}[\psi]),&
\hm{\zeta}[\psi]=\hm{\varphi}[\psi]-\hm{\xi}[\psi].
\end{align*}
The following result has been proved in Proposition 3.1 of \cite{AtaBis}.
\begin{prop}
The map $\hm{\zeta}$ is an admissible strategy. Moreover, it is a minimizing strategy, namely,
$V=\sup_{\psi \in \mathcal{P}}c(\psi,\hm{\zeta}[\psi])$.
\end{prop}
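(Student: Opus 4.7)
The plan is to verify three things in sequence: (a) that $\hm{\zeta}$ is a bona fide strategy whose values lie in $\mathcal{E}$, (b) that it is admissible in the sense of satisfying the non-negativity constraint \eqref{nonneg}, and (c) that it is \emph{pointwise} minimizing, i.e. $c(\psi,\hm{\zeta}[\psi])\le c(\psi,\alpha[\psi])$ for every $\alpha\in\mathcal{A}$ and every $\psi\in\mathcal{P}$. Given (c), taking the supremum in $\psi$ and then the infimum in $\alpha$ gives $\sup_\psi c(\psi,\hm{\zeta}[\psi])\le V$; the reverse inequality is immediate from $\hm{\zeta}\in\mathcal{A}$, which is what (a) and (b) deliver.

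For (a), I would observe that $\theta\cdot\hm{\zeta}[\psi]=\theta\cdot f(\textbf{w}[\psi])-\theta\cdot\hm{\xi}[\psi]=\textbf{w}[\psi]-\theta\cdot\hm{\xi}[\psi]$ using the identity $\theta\cdot f(w)=w$ from Lemma \ref{cont}; by the explicit formula \eqref{55} this equals $-\inf_{s\le\cdot}(\theta\cdot\hm{\xi}[\psi](s)\wedge 0)$, which starts at $0$ and is nondecreasing, so $\hm{\zeta}[\psi]\in\mathcal{E}$. Causality is inherited from $\hm{\xi}$ (which is explicit in $\psi$), pushed through the one-dimensional Skorohod map (causal by its $\sup$-representation), then through $f$ (pointwise), and finally through the subtraction. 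Measurability is a by-product of the continuity of each of these operations in the uniform topology. For (b), the dynamics corresponding to the data $(\psi,\hm{\zeta}[\psi])$ is $\hm{\xi}[\psi]+\hm{\zeta}[\psi]=\hm{\varphi}[\psi]=f(\textbf{w}[\psi])$, which lies in $\R_+^d$ since $f$ does.

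The heart of the proof is (c). Fix $\alpha\in\mathcal{A}$ and $\psi\in\mathcal{P}$, and let $\varphi_\alpha$ be the associated dynamics. Since $\alpha[\psi]\in\mathcal{E}$, the function $\ell:=\theta\cdot\alpha[\psi]$ is nondecreasing with $\ell(0)=0$, and since $\varphi_\alpha\ge 0$ we have $\theta\cdot\hm{\xi}[\psi]+\ell\ge 0$. The minimality characterization of the one-dimensional Skorohod map $\Gam$---namely, that $\Gam[z]-z$ is the smallest nondecreasing process starting at $0$ which makes $z+(\cdot)$ nonnegative---then yields $\ell(t)\ge\textbf{w}[\psi](t)-\theta\cdot\hm{\xi}[\psi](t)$, and hence $\theta\cdot\varphi_\alpha(t)\ge\textbf{w}[\psi](t)$ for every $t$. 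Now by \eqref{minimizer}, $C(\varphi_\alpha(t))\ge C(f(\theta\cdot\varphi_\alpha(t)))$, and by monotonicity of $w\mapsto C(f(w))$ (see below), this is at least $C(f(\textbf{w}[\psi](t)))=C(\hm{\varphi}[\psi](t))$. Integrating over $[0,T]$ and subtracting the common $\mathbb{I}(\psi)$ gives $c(\psi,\alpha[\psi])\ge c(\psi,\hm{\zeta}[\psi])$, completing (c).

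The only non-cosmetic step, and the one I would flag as the main obstacle, is the monotonicity of $w\mapsto C(f(w))$, which is not stated as part of Lemma \ref{cont} but is implicit in its proof: the scalar function $F$ constructed there is strictly increasing, so the common value $c=\mu_iC_i'(f_i(w))$ is strictly increasing in $w$, forcing each $f_i$ and hence $C\circ f$ to be strictly increasing in $w$. Everything else reduces to standard properties of the Skorohod map and to the minimizer characterization already in hand from Lemma \ref{cont}.
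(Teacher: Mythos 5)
The paper does not actually prove this proposition---it cites Proposition~3.1 of \cite{AtaBis}---so there is no in-text argument to compare against, but your self-contained proof is correct and complete. The decomposition into (a) range/causality, (b) feasibility via $\hm{\varphi}[\psi]=f(\textbf{w}[\psi])\in\R_+^d$, and (c) pointwise dominance $c(\psi,\hm{\zeta}[\psi])\le c(\psi,\alpha[\psi])$ is exactly the right way to organize the argument; pointwise optimality is strictly stronger than what the proposition asserts and makes the $\inf\sup$ bookkeeping trivial, as you note. The crux---combining the minimality property of the one-dimensional Skorohod reflection (any nondecreasing $\ell$ with $\ell(0)=0$ keeping $\theta\cdot\hm{\xi}[\psi]+\ell\ge0$ dominates $\textbf{w}[\psi]-\theta\cdot\hm{\xi}[\psi]$) with the two facts that $f(w)$ minimizes $C$ on the slice $\{\theta\cdot q=w\}$ and that $w\mapsto C(f(w))$ is nondecreasing---is the standard two-step projection-then-workload argument and it is carried out correctly. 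You are also right to flag the monotonicity of $C\circ f$ as a point that Lemma~\ref{cont} does not state but that its proof does establish: the map $c\mapsto q^c$ is componentwise strictly increasing and $F$ is strictly increasing, so $w\mapsto f_i(w)$ and hence $w\mapsto C(f(w))$ are strictly increasing. One small remark on (a): for $\hm{\zeta}[\psi]\in\mathcal{E}$ you only need $\theta\cdot\hm{\zeta}[\psi]$ to start at zero and be nondecreasing, which your identity $\theta\cdot\hm{\zeta}[\psi]=-\inf_{s\le\cdot}(\theta\cdot\hm{\xi}[\psi](s)\wedge0)$ gives directly; no componentwise monotonicity is needed (and indeed the components of $\hm{\zeta}[\psi]$ need not be monotone), which is the reason $\mathcal{E}$ is defined via $\theta\cdot\zeta$ rather than $\zeta$ itself. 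Overall the proof stands.
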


\section{\bf Proof of Main Results}
\subsection{Proof of Theorem \ref{mainthm1}}\label{sec41}

In this section we keep Assumptions \ref{A1} and \ref{A2} in force and shorten the notation
$B^{*,n}$ to $B^n$. Our goal is to argue that
\begin{equation}\label{53}
\limsup_n J^n(B^n)\leq V.
\end{equation}
As we mentioned above, $\lim_nV^n=V$, and so Theorem \ref{mainthm1} will follow once \eqref{53}
is established.

To this end, given a constant $\Del$ define the set
$$\mathcal{D}_{\Del}=\{\psi \in\mathcal{D}([0,T],\mathbb{R}^{2d}): \mathbb{I}(\psi)\leq \Del\}\,.$$
Then by the definition of the rate function $\mathbb{I}$, $\mathcal{D}_{\Del}$ is a compact set containing absolutely continuous functions starting from zero, with derivative having $L^2$ norm uniformly bounded. By the compactness there exists a constant $M$ (depending on $\Del$) such that $\|\psi^{(1)}\|_T+\|\psi^{(2)}\|_T \leq M$ whenever $\psi\in\mathcal{D}_\Del$. Also, by the $L^2$ bound, the members of $\mathcal{D}_{\Del}$ are equicontinuous.
For any $\tilde{\psi} \in \mathcal{D}_{\Del}$ and any $r>0$ define
$$\mathcal{A}_r(\tilde{\psi})=\{\psi \in\mathcal{D}([0,T],\mathbb{R}^{2d}):d(\psi,\tilde{\psi})<r\}\,,$$ where $d$ is the Skorohod metric. We recall from \cite{Bill} that
$$
d(\psi, \tilde{\psi})=\inf_{g \in \Ups}\|g\|^\circ\vee
\|\psi-\tilde{\psi}\circ g\|_T,
$$
where $\Ups$ is the set of strictly increasing, continuous functions from $[0,T]$ onto itself,
and $$ \|g\|^\circ=\sup_{0\leq s< t \leq T}\biggl|\log\frac{g(t)-g(s)}{t-s}\biggr|\,. $$

Now, for any $g \in \Ups$,
\begin{align*}
\|\psi(t)-\tilde{\psi}(t)\|&\leq \|\psi(t)-\tilde{\psi}(g(t))\|+\|\tilde{\psi}(g(t))-\tilde{\psi}(t)\|,
\qquad t\in[0,T],\\
\|g(\cdot)-\cdot\|_T&\leq T\bigl(e^{\|g\|^\circ}-1\bigr).
\end{align*}
Thus by equicontinuity, for any $\eta >0$ there exists $r>0$ such that,
for any $\tilde{\psi} \in \mathcal{D}_{\Del}$,
\begin{align}\label{rel3}
\psi \in \mathcal{A}_r(\tilde{\psi})\quad \mbox{implies}\quad \|\psi -\tilde{\psi}\|_T < \eta\,.
\end{align}
Since $\mathcal{D}_\Del$ is compact
and $\mathbb{I}$ is lower semi-continuous, it is possible to choose a finite collection of members, $\psi^1,\psi^2,\ldots,\psi^N$, of $\mathcal{D}_{\Del}$, and positive constants $r_1,r_2,\ldots,r_N$,
such that, denoting  $\mathcal{A}^k= \mathcal{A}_{r_k}(\psi^k)$, one has
$\mathcal{D}_{\Del} \subset \cup_k\mathcal{A}^k$, and
\begin{align}
\inf\{\mathbb{I}(\psi): \psi \in \oo{\mathcal{A}^k}\}\geq \mathbb{I}(\psi^k)-\frac{\eps}{2},\quad \quad k=1,2,\ldots,N.
\end{align}
We fix such $\{\psi^k\}$ and $\{r_k\}$. By \eqref{rel3} we may, and will,
assume without loss of generality, that $\max_kr_k$ is so small that
\begin{equation}
  \label{59}
  \psi\in\mathcal{A}^k \quad \text{implies}\quad
  \sum_{j=1}^d\sum_{m=1}^2\|\psi^{(m)}_j-\psi^{k,(m)}_j\|_T\le\frac{\del}{8\theta_{\max}},
  \qquad k=1,2,\ldots,N,
\end{equation}
where $\theta_{\max}=\max_j\theta_j$, and $\del>0$ is chosen so that for $f$ as in Lemma \ref{cont},
\begin{equation}
  \label{61}
  \max_j\text{\osc}_\del(f_j,L)<\frac{\eps}{4},
  \qquad L:=4\theta_{\max}\Big(\|y\|_1T+d(M+1)\Big).
\end{equation}

For $k=1,2,\ldots,N$, let
$$(\varphi^k,\xi^k,\zeta^k,w^k)=(\hm{\varphi}[\psi^k],\hm{\xi}[\psi^k],\hm{\zeta}[\psi^k],\textbf{w}[\psi^k])\,.$$
Thus $\varphi^k$ is the dynamics corresponding to $\psi^k$ and $\zeta^k$. Let $\La^n=\|\tilde{A}^n\|_T+\|\tilde{S}^n\|_T$, and define
\begin{align}
\Om^n_k=\{(\tilde{A}^n,\tilde{S}^n)\in \mathcal{A}^k\},\quad\quad k=1,2,\ldots,N\,.
\end{align}
Fix $\eps> 0$. Then as in the proof of Theorem 4.2 in \cite{AtaBis},
it is enough to show that there exists a constant $c_1$ such that for all $n$ sufficiently large,
\begin{align}\label{est1}
\|\tilde{Q}^n\|_T\leq c_1(1+\La^n)
\end{align}
and
\begin{align}\label{est2}
\|\tilde{Q}^n-\varphi^k\|_T \leq c_1\eps \quad \quad \mbox{on}\,\,\Om^n_k, k=1,2,\ldots,N \,.
\end{align}Once the two estimates are established, the rest of the proof proceeds as in
Step 5 of Theorem 4.2 in \cite{AtaBis}.
To this end,
let $\theta^n=((\mu^n_1)^{-1}n,\ldots,(\mu^n_d)^{-1}n)$. Then, it follows from \eqref{systeq}
and \eqref{control} that
\begin{align}\label{rel2}
\theta^n\cdot \tilde{Q}^n(t)=\theta^n\cdot Y^n(t) + \frac{\sqrt{n}}{b_n}
\int_0^t1_{\{\theta^n\cdot \tilde{Q}^n(s)=0\}}ds\,,
\end{align}
where $Y^n_i(t)=y^n_i t + \tilde{A}^n_i(t) - \tilde{S}^n_i(T^n_i(t))$.
Since $\theta^n\cdot \tilde{Q}^n$ is non-negative and the last term on the right hand side increases
only on the set of times $\{t:\theta^n\cdot \tilde{Q}^n(t)=0\}$, it follows
that $\theta^n\cdot\tilde Q^n$ is a solution to the
one-dimensional Skorohod problem for $\theta^n\cdot Y^n$. In particular,
\[
\theta^n\cdot\tilde Q^n=\Gam[\theta^n\cdot Y^n].
\]
Thus by \eqref{55},
\begin{align}\label{est3}
\|\theta^n\cdot \tilde{Q}^n\|_T+\| \theta^n\cdot Z^n\|_T \leq 3\|\theta^n\cdot Y^n\|_T\,.
\end{align}
By the assumptions \eqref{56} on the limit parameters, and the definitions of the constants
$\theta^n$, $\theta$, $y^n$ and $y$, one has $\theta^n\to\theta$ and $y^n\to y$.
Using the nonnegativity of $\tilde{Q}^n$,
it follows from \eqref{est3} that \eqref{est1} holds,
for a suitable constant $c_1$ that does not depend on $n$.

Note that $\La^n$ is bounded by $M+1$ on $\cup_k\Om^n_k$.
As a result, we also have the following uniform bound on $\tilde Q^n$,
namely, for all large $n$, $\|\tilde Q^n\|_T\le c_2$ on the event $\cup_k\Om^n_k$,
where $c_2$ does not depend on $n$ or $\eps$ (but may depend on $\Del$).
A more concrete bound will be needed for $\theta^n\cdot\tilde Q^n$.
Namely, on $\cup_k\Om^n_k$,
\begin{align}\label{60}\notag
\|\theta^n\cdot\tilde Q^n\|_T &\le3\|\theta^n\cdot Y^n\|_T
\le4\theta_{\max}\Big(\|y\|_1T+\sum_{j=1}^d(\|\tilde A^n_j\|_T+\|\tilde S^n_j\|_T)\Big)\\
&\le4\theta_{\max}\Big(\|y\|_1T+d(M+1)\Big)=L,
\end{align}
by \eqref{61}.

We next prove \eqref{est2}. Let
\[
G^n_i(t)=\frac{\theta^n_i}{\theta_i}\tilde{Q}^n_i(t)-f_i(\theta^n\cdot \tilde{Q}^n(t)),
\qquad t\in[0,T],\,i\in\calI.
\]
We first show that, for all $n$ large, on $\Om^n_k$, one has
\begin{align}\label{est9}
\|G^n_i\|_T < c_3\eps, \quad \quad \text{for all } i,
\end{align}
for some constant $c_3$ that does not depend on $n$ or $\eps$.
Note that a sufficient condition for \eqref{est9} is that, for each $i$,
\begin{align}\label{est4}
\inf_{0\leq t\leq T}G^n_i(t)\geq -\eps\,.
\end{align}
To see this, recall that from the definition of $f$ we have
\begin{align}\label{rel1}
\sum_{i}\theta_iG^n_i=0.
\end{align}
Hence, if \eqref{est4} holds then
\begin{align*}
\theta_iG^n_i=-\sum_{j:j\neq i}\theta_jG^n_j\leq \eps\sum_{j:j\neq i}\theta_j\,,
\end{align*}
by which \eqref{est9} holds.

We thus turn to prove \eqref{est4}. Fix $i$.
Arguing by contradiction, assume that \eqref{est4} is false.
Recall that we assume the initial condition $\tilde Q^n(0)=0$.
Moreover, the jump sizes of the process $\tilde Q^n$, hence those of $G^n$, are
uniformly small when $n$ is large, by appealing to the continuity of $f$ and the
uniform bound on $\tilde Q^n$ alluded to above.
By these considerations, it follows that, provided $n$ is sufficiently large,
there must exist times $\sigma^n$ and $\tau^n$ such that $0\le\sigma^n<\tau^n\le T$, and
\begin{align}\label{est5}
G^n_i(\tau^n)< -\eps\,,
\qquad
G^n_i(\sigma^n)\geq -\frac{\eps}{2}\,,
\qquad
G^n_i(t) < -\frac{\eps}{4} \text{ for all } t\in[\sigma^n,\tau^n].
\end{align}
It follows from the last assertion of \eqref{est5} that,
for all $n$ sufficiently large, for $t\in[\sigma^n,\tau^n]$,
\begin{align}\label{est6}
\tilde{Q}^n_i(t)-f_i(\theta^n\cdot \tilde{Q}^n(t))< 0\,,
\end{align}
where we used the uniform bound on $\tilde Q^n$ and the convergence $\theta^n\to\theta$.
Also, by \eqref{rel1} and \eqref{est5}, for every $t \in [\sigma^n,\tau^n]$
there exists a $j$ (depending on $t$) such that $G^n_j(t)> c^{\prime}\eps$
for some constant $c^{\prime}>0$. Therefore, provided that $n$ is large,
for every $t\in[\sig_n,\tau_n]$ there exists a $j$ such that
\begin{align}\label{est7}
\tilde{Q}^n_j(t)-f_j(\theta^n\cdot \tilde{Q}^n(t))> 0\,.
\end{align}
By \eqref{57},
$$
\mu_iC^{\prime}_i(f_i(\theta^n\cdot \tilde{Q}^n))=\mu_jC^{\prime}_j(f_j(\theta^n\cdot
\tilde{Q}^n))\,.
$$
Using \eqref{est6} and \eqref{est7} and the strict
monotonicity of $C^{\prime}_m$ for all $m$,
we have that
\begin{align}\label{62}
\text{for every $t \in [\sigma^n,\tau^n]$ there exists a $j$ such that }
\mu_iC^{\prime}_i(\tilde{Q}^n_i(t))<\mu_jC^{\prime}_j(\tilde{Q}^n_j(t))\,.
\end{align}
By the way the policy is defined, specifically \eqref{control} and \eqref{58}, we obtain
that on the interval $[\sigma^n,\tau^n]$ class $i$ does not receive any service,
namely $B^n_i=0$.

The first two statements of \eqref{est5} yield
\begin{align}\label{est8}
\frac{\theta^n_i}{\theta_i}(\tilde{Q}^n_i(\tau^n)-\tilde{Q}^n_i(\sigma^n))
+\frac{\eps}{2}\leq f_i(\theta^n\cdot \tilde{Q}^n(\tau^n))-f_i(\theta^n\cdot \tilde{Q}^n(\sigma^n))\,.
\end{align}
Fix a sequence $\{a_n\}$ having the properties that $a_n\rightarrow 0$
but $\sqrt{n}b_n^{-1}a_n \rightarrow \infty$. We distinguish between two cases.

\noi
\textit{Case 1:} $\tau^n-\sigma^n\leq a_n$.\\
Since on the interval $[\sigma^n,\tau^n]$ class $i$ receives no service,
the queue length can only increase over the interval, as follows from the balance equation
\eqref{ioeq}, \eqref{50}, and $B^n_i=0$. Thus
$\tilde{Q}^n_i(\tau^n)\ge\tilde{Q}^n_i(\sigma^n)$.
We obtain from \eqref{est8} that
\begin{align}\label{63}
f_i(\theta^n\cdot \tilde{Q}^n(\tau^n))-f_i(\theta^n\cdot \tilde{Q}^n(\sigma^n))\geq \frac{\eps}{2}>\frac{\eps}{4}.
\end{align}
(Working with the bound $\eps/4$ instead of $\eps/2$
will be useful in the proof provided in Section \ref{sec42}.)
Recalling that $\theta^n\cdot\tilde Q^n$ is uniformly bounded by $L$ (see \eqref{60}),
we have from \eqref{61} that
\begin{align*}
\theta^n\cdot \tilde{Q}^n(\tau^n)-\theta^n\cdot \tilde{Q}^n(\sigma^n)\geq \del\,,
\end{align*}
where $\del$ is as in \eqref{61}.
From \eqref{est6}, it follows that in the interval $[\sigma^n,\tau^n]$,
$\theta^n\cdot \tilde{Q}^n >0$, because the function $f$ vanishes at zero.
Thus by \eqref{rel2},
\begin{align*}
\theta^n\cdot(Y^n(\tau^n)-Y^n(\sigma^n))\geq \del\,.
\end{align*}
Hence, using the definition of $Y^n$, the convergence $\theta^n\to\theta$,
and the fact that $T^n_i$ is Lipschitz with constant $1$, we obtain
\begin{align*}
\del\le2\theta_{\max}\Big[2\|y\|_1a_n+\sum_{j=1}^d \text{osc}_{a_n}(\tilde{A}^n_j)
+\sum_{j=1}^d \text{osc}_{a_n}(\tilde{S}^n_j)\Big]\,.
\end{align*}
Now, on $\Om^n_k$,
$$
\mbox{osc}_{a_n}(\tilde{A}^n_j)\leq 2\|\tilde{A}^n_j-\psi^{k,(1)}_j\|_T
+\mbox{osc}_{a_n}(\psi^{k,(1)}_j).
$$
A similar statement holds for $\tilde{S}^n$.
Hence, on $\Om^n_k$,
\[
\del\le 2\theta_{\max}\Big[2\|y\|_1a_n+\frac{\del}{4\theta_{\max}} +\sum_{j=1}^d\sum_{m=1}^2
\{\text{osc}_{a_n}(\psi^{k,(m)}_j)\}\Big],
\]
where we used \eqref{59}.
By the equicontinuity of the functions in the class $\mathcal{D}_{\Del}$,
we obtain $\del\le\del/2+\eta_n$ for some $\eta_n\to0$. Hence,
for all $n$ large enough we arrive at a contradiction.

\noi
\textit{Case 2:} $\tau^n-\sigma^n\geq a_n$.\\
On $\Om^n_k$, the right hand side of \eqref{est8} is bounded.
As for its left hand side, we have from \eqref{systeq} that
\begin{align}\label{est12}
\nonumber \tilde{Q}^n_i(\tau^n)-\tilde{Q}^n_i(\sigma^n)&=y^n_i(\tau^n-\sigma^n)
+ \tilde{A}^n_i(\tau^n)-\tilde{A}^n_i(\sigma^n)
-(\tilde{S}^n_i(T^n_i(\tau^n))-\tilde{S}^n_i(T^n_i(\sigma^n)))
\\
&\quad +\frac{\mu^n_i}{n}\frac{\sqrt{n}}{b_n}\rho_i(\tau^n-\sigma^n)\,,
\end{align}
where we used the fact that $T^n_i$ does not increase on the interval $[\sig_n,\tau_n]$.
On the right hand side of \eqref{est12},
all terms are bounded, except the last term
which tends to infinity as $n\to\iy$.
Thus again for large $n$ we arrive at a contradiction.

Summarizing the two cases, we conclude that \eqref{est4},
hence \eqref{est9}, holds on $\Om^n_k$ provided that $n$ is sufficiently large.

Finally, we use \eqref{est9} to show \eqref{est2}.
Since $\theta^n \rightarrow \theta$ and $\tilde{Q}^n$ is bounded on $\Om^n_k$,
there exists a constant $c_4$ such that on $\Om^n_k$, for all large $n$,
\begin{align}\label{est10}
\|\tilde{Q}^n-f(\theta^n\cdot\tilde{Q}^n)\|_T< c_4\eps\,.
\end{align}
Now using the facts that $\theta^n\cdot \tilde{Q}^n=\Gam[\theta^n\cdot Y^n]$
and $\theta\cdot\varphi^k=\Gam[\theta\cdot\xi^k]$, where
$\xi^k=y\io+\psi^{k,(1)}-\rho[\psi^{k,(2)}]$,
and the Lipschitz continuity of the map $\Gam$ we have for all $n$ sufficiently
large,
\begin{align*}
\|\theta^n\cdot \tilde{Q}^n-\theta\cdot \varphi^k\|_T &\leq 2\|\theta^n\cdot Y^n-\theta^n\cdot\xi^k\|_T+2\|\theta^n-\theta\|_T\\
&\leq c_5\|Y^n-\xi^k\|_T+\eps\\
&\leq c_5[\|\tilde{A}^n-\psi^{k,(1)}\|_T+\|\tilde{S}^n\circ T^n -\rho[\psi^{k,(2)}]\|_T]+2\eps\,.
\end{align*}Now by \eqref{est3} and the definition of $Z^n$ we have
$$
\|\rho_i \io-T^n_i\|_T\rightarrow 0,\quad\quad \mbox{as}\quad n\rightarrow \infty\,.
$$
Thus on $\Om^n_k$, for all sufficiently large $n$, there exists a constant $c_6$ such that,
\begin{align}
\|\theta^n\cdot\tilde{Q}^n-\theta\cdot\varphi^k\|_T<c_6\eps\,.
\end{align}
Now, from the definition of $\varphi^k$ we have $\varphi^k=f(\theta\cdot\varphi^k)$.
Since on $\Om^n_k$, $\tilde{Q}^n$ and $\varphi^k$ are bounded,
by the continuity of $f$ there exists a constant $c_7$ such that
\begin{align}\label{est11}
\|f(\theta^n\cdot\tilde{Q}^n)-\varphi^k\|_T < c_7\eps.
\end{align}
Combining \eqref{est10} and \eqref{est11}, we obtain \eqref{est2}. This concludes the proof.
\qed

\subsection{Proof of Theorem \ref{mainthm2}}\label{sec42}

Since the control space $\mathcal{B}^{\#,n}$ is contained in $\mathcal{B}^n$, we automatically have
$\hat{V}^n \geq V^n$. Hence it suffices to prove the asymptotic optimality of
the non-preemptive generalized $c\mu$ rule, $B^{\#,n}$.
The proof for that follows the same steps as for the preemptive case with some modifications,
which we describe next. The first deviation from the proof of Theorem \ref{mainthm1}
is related to \eqref{62}. While \eqref{62} is a valid statement, we cannot deduce
from it that $B^n_i=0$ on the interval $[\sig^n,\tau^n]$ (here and in what follows,
we suppress the symbol $\#$ in $B^{\#,n}$).
Indeed, \eqref{62} assures that every time the server {\it becomes available}
during this interval customers from classes other than $i$ are selected for service, but
it is possible that at the time $\sig^n$ a class-$i$ customer is already in service. In this case,
one has $B^n_i=1$ until this service completes.
Note that \eqref{est8} remains valid.
In what follows, we describe how Cases 1 and 2 of the proof of Theorem \ref{mainthm1}
are modified to address this change.
With the modified version of these two cases, the remainder of the proof proceeds as before.

For Case 1, one can no longer say that class $i$ receives no service. However,
at most one customer of that class is served during $[\sig^n,\tau^n]$, by which
$\tilde Q^n_i(\tau^n)-\tilde Q^n_i(\sig^n)\ge -b_n^{-1}n^{-1/2}$. With \eqref{est8},
using $\theta^n_i\to\theta_i$ and assuming $n$ is large, we obtain
\begin{align*}
f_i(\theta^n\cdot \tilde{Q}^n(\tau^n))-f_i(\theta^n\cdot \tilde{Q}^n(\sigma^n))\geq
\frac{\eps}{2}-\frac{2}{b_n\sqrt{n}}>\frac{\eps}{4}.
\end{align*}
As a result, the final conclusion of \eqref{63} is still valid, and we can proceed
exactly as before.

As for Case 2, let
\[
\ST^n_j=\max_{l\le A^n_j(T)}\frac{\ST_j(l)}{\mu^n_j}
\]
denote the maximal class-$j$ service time among all jobs arriving up to time $T$.
Equation \eqref{est12} need not hold because of the possible customer in service at time
$\sig^n$. However, the time devoted to this customer
is bounded by $\ST^n_i$. As a result, in place of \eqref{est12}, we may write
\begin{align*}
\tilde{Q}^n_i(\tau^n)-\tilde{Q}^n_i(\sigma^n)&\ge
y^n_i(\tau^n-\sigma^n)
+ \tilde{A}^n_i(\tau^n)-\tilde{A}^n_i(\sigma^n)
-(\tilde{S}^n_i(T^n_i(\tau^n))-\tilde{S}^n_i(T^n_i(\sigma^n)))
\\
&\quad +\frac{\mu^n_i}{n}\frac{\sqrt{n}}{b_n}\rho_i(\tau^n-\sigma^n)
-\frac{\mu^n_i}{n}\frac{\sqrt{n}}{b_n}\rho_i\ST^n_i.
\end{align*}
The proof will proceed as before if we show that the last term in the above display
is bounded off of an event of negligible probability. More precisely, it suffices to show
that for every constant $c_1$, we have for large $n$,
\begin{equation}
  \label{64}
  \PP\Big(\frac{\sqrt{n}}{b_n}\max_j\ST^n_j> 1\Big)\le e^{-c_1b_n^2}.
\end{equation}

To this end, note first that,
since $\tilde{A}^n$ satisfies the MDP, there exists a constant $c_3$ such that for any constant
$c_1$, for all $n$ large enough,
$$
\mathbb{P}(\max_jA^n_j(T)> c_3 n)\leq e^{-c_1b_n^2}\,.
$$
Then with $c_3$ as above we have,
\begin{align*}
\mathbb{P}\Big(\ST^n_j> \frac{b_n}{\sqrt{n}}\Big)
&\leq c_3 n \mathbb{P}\Big(\ST_j(1) > \frac{\mu_j^n b_n}{\sqrt{n}}\Big)
+\mathbb{P}(A^n_j(T))>c_3 n)\\
&\leq c_3 n \mathbb{P}\Big(\ST_j(1) > \frac{\mu_j b_n\sqrt{n}}{2}\Big)
+e^{-c_1b_n^2}\\
&\leq c_4 n e^{-\frac{u_0\mu_j}{2}(b_n \sqrt{n})}+e^{-c_1b_n^2}\\
&\le 2e^{-c_1b_n^2}\,,
\end{align*}
where $c_4$ is a constant, and the second last line follows from Assumption \ref{A3}.
It follows that for any $c_1$, \eqref{64} holds for all n large enough.\\
This completes the proof.
\qed


\skp

\end{document}